\documentclass[11pt,reqno]{amsart}

\usepackage{amssymb}
\usepackage{amsfonts}

\numberwithin{equation}{section}
     
\theoremstyle{plain}

\newtheorem{theorem}[subsection]{Theorem}
\newtheorem{proposition}[subsection]{Proposition}
\newtheorem{lemma}[subsection]{Lemma}

\theoremstyle{definition}

\newtheorem*{mpet-rpt}{Theorem \ref{mpet}}

\renewcommand{\leq}{\leqslant}
\renewcommand{\geq}{\geqslant}

\newcommand{\md}[1]{\ensuremath{(\operatorname{mod}\, #1)}}





\newcommand\E{{\mathbb{E}}}
\newcommand\Z{\mathbb{Z}}
\newcommand\R{\mathbb{R}}

\newcommand\C{\mathbb{C}}

\newcommand\eps{\varepsilon}
\newcommand\GI{\operatorname{GI}}

\newcommand\diam{\operatorname{diam}}
\newcommand\poly{\operatorname{poly}}



\begin{document}
\title[Szemer\'edi's theorem]{Yet another proof of Szemer\'edi's theorem}
\author{Ben Green}
\address{Centre for Mathematical Sciences\\
Wilberforce Road\\
Cambridge CB3 0WA\\
England }
\email{b.j.green@dpmms.cam.ac.uk}
\author{Terence Tao}
\address{Department of Mathematics\\
UCLA\\
Los Angeles, CA 90095\\
USA}
\email{tao@math.ucla.edu}

\subjclass{}

\begin{abstract} 
Using the density-increment strategy of Roth and Gowers, we derive Szemer\'edi's theorem on arithmetic progressions from the inverse conjectures $\GI(s)$ for the Gowers norms, recently announced by the authors and Ziegler in \cite{uk-inverse}. 
\end{abstract}

\maketitle

\begin{center}
\emph{To Endre Szemer\'edi on the occasion of his 70th birthday.}
\end{center}

\tableofcontents

\setcounter{tocdepth}{1}

\section{Introduction}

In this note we show how Szemer\'edi's famous theorem \cite{szemeredi-4,szemeredi-aps} on arithmetic progressions follows from the inverse conjecture $\GI(s)$ for the Gowers norms, recently announced in \cite{uk-inverse}. This paper is designed as a coda to \cite{green-tao-arithregularity}, and in particular we refer the reader to that paper (or to many other places in the literature) for the definition of the following terms, which we shall use without further comment: \emph{filtered nilmanifold of complexity $\leq M$, polynomial sequence, degree $\leq s$ polynomial nilsequence of complexity at most $M$, rational polynomial sequence, Gowers norm, generalised von Neumann theorem} and \emph{smoothness norm $C^{\infty}[N]$}.

Our main point is to show that Szemer\'edi's theorem can actually be derived rather easily from $\GI(s)$. We gave a different deduction in \cite{green-tao-arithregularity}, designed to illustrate that for a large class of theorems (including Szemer\'edi's theorem) it essentially suffices to ``check the result for nilsequences''. That argument was somewhat complicated, not least because it relied heavily on the quantitative distribution results for nilsequences obtained in \cite{green-tao-nilratner}.

The argument we give here is based on the density-increment strategy of Roth \cite{roth} and Gowers \cite{gowers-4aps, gowers-longaps}. In fact our argument is, structurally, the same as that of Gowers except that we use the inverse theorem as a black box rather than prove, as Gowers did, a weaker version of it.

The only remotely new technical result in this note is the following. Here, and elsewhere in the paper, write $\diam_S(f) := \sup_{s_1, s_2 \in S} d_X(f(s_1) ,f(s_2))$ whenever $f : S \rightarrow X$ is some function from a set $S$ into a metric space $(X, d_X)$.

\begin{theorem}\label{recur-thm}
Let $\eps > 0$ be a real parameter, let $s \geq 1$ be an integer, and let $M$ be a complexity parameter. Then there is a number $\kappa_{s,M} > 0$ with the following property. Let $(F(g(n)\Gamma))_{n \in \Z}$ be a degree $\leq s$ polynomial nilsequence of complexity at most $M$, and let $P \subseteq \Z$ be an arithmetic progression. Then we may partition $P$ into a disjoint union of progressions $P_i$, $i = 1,\dots, m$, each of size $\gg_{M,\eps} |P|^{\kappa_{s,M}}$, such that  
\[ \diam_{P_i}(F(g(n)\Gamma)) \leq \eps\] for all $i = 1,\dots, m$.
\end{theorem}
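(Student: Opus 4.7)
The plan is to pass to Mal'cev coordinates, reducing the problem to simultaneous polynomial recurrence modulo $1$ on a torus, and then prove the torus statement by induction on degree using Dirichlet's approximation theorem. The argument is elementary and avoids the quantitative Leibman equidistribution theorem from \cite{green-tao-nilratner}.

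First fix a Mal'cev basis of $G/\Gamma$ adapted to the complexity bound $M$, and express the polynomial sequence $n \mapsto g(n)\Gamma$ in second-kind Mal'cev coordinates as $(p_1(n), \dots, p_d(n)) \bmod \Z^d$, where $d = \dim G$ and each $p_i : \Z \to \R$ is a polynomial of degree $\leq s$ whose coefficients are controlled in terms of $M$. Since $F$ is Lipschitz on $G/\Gamma$ and the nilmanifold metric is locally equivalent to the quotient Euclidean metric on Mal'cev coordinates (with constants depending on $M$), the theorem reduces to the following: for any polynomials $p_1, \dots, p_d : \Z \to \R$ of degree $\leq s$ and complexity $\leq M$ and any AP $P$ of length $N$, one can partition $P$ into sub-APs of size $\gg_{M,\eps} N^{\kappa_{s,M}}$ on each of which $\|p_i(n) - p_i(n')\|_{\R/\Z} \ll_M \eps$ for all $i$ and all $n, n'$ in the sub-AP.

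This torus claim I would prove by induction on $s$. The case $s = 0$ is immediate. For the inductive step, apply simultaneous Dirichlet to the degree-$s$ coefficients $\alpha_1, \dots, \alpha_d$ of the $p_i$: with $Q := N^{\theta_s}$ for a small $\theta_s > 0$ to be chosen, find $q \leq Q$ with $\|q^s \alpha_i\|_{\R/\Z} \leq \delta$ for every $i$ and some small $\delta$. Partition $P$ into $O(q)$ sub-APs of common difference $qr$ (with $r$ the step of $P$) and length $L := \lfloor N/(2q) \rfloor$. Reparametrising each sub-AP by $m = 0, 1, \dots, L-1$, each $p_i$ becomes a new polynomial in $m$ of degree $\leq s$ whose degree-$s$ coefficient lies within $\delta r^s$ of an integer; subtracting this integer, the residual degree-$s$ term on the sub-AP is $O(\delta L^s)$. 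Choosing $\delta$ so that $\delta L^s \ll_M \eps$, the residual polynomial is within $O(\eps)$ mod $\Z$ of a polynomial of degree $\leq s-1$, and the inductive hypothesis applies.

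The main obstacle is the quantitative bookkeeping: at each stage of the induction the exponent $\kappa$ is multiplied by a factor strictly less than $1$ depending on $s$ and $M$, and one must choose the parameters $\theta_s$, $\delta$, $L$ at each stage carefully so that the final $\kappa_{s,M}$ remains strictly positive. A secondary point, standard but nontrivial, is that the Mal'cev reduction yields metric-equivalence constants polynomial in $M$, a feature built into the Mal'cev formalism for filtered nilmanifolds of bounded complexity.
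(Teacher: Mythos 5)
There is a genuine gap in the reduction to the torus. You claim that once $g(n)\Gamma$ is written in second-kind Mal'cev coordinates $(p_1(n),\dots,p_d(n))$, it suffices to make $\|p_i(n)-p_i(n')\|_{\R/\Z}$ small for each $i$, because ``the nilmanifold metric is locally equivalent to the quotient Euclidean metric on Mal'cev coordinates.'' This is false unless $G$ is abelian: the identifications induced by right-multiplication by $\Gamma$ on the Mal'cev coordinate cube $[0,1)^d$ are \emph{not} the torus identifications, precisely because of the nontrivial group law. Concretely, in the Heisenberg nilmanifold with second-kind coordinates $(x,y,z)$, the points with coordinates $(0.01,\,0.5,\,0.5)$ and $(0.99,\,0.5,\,0.5)$ differ coordinatewise by essentially $(1,0,0)\equiv 0 \pmod{\Z^3}$, yet their distance in $G/\Gamma$ is of order $1$: applying $\gamma=(-1,0,c)\in\Gamma$ to bring the first coordinates together shifts the $z$-coordinate by $\pm y\approx\pm\tfrac12$, which no integer $c$ can repair. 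Thus a Lipschitz $F$ on $G/\Gamma$ can oscillate by $\Omega(1)$ between two points whose Mal'cev coordinates are arbitrarily close mod $\Z^d$, and your torus lemma (which is correct, and is essentially the paper's Lemma~\ref{lem1} applied to the $d$ coordinate polynomials simultaneously) does not control the diameter of the nilsequence. A secondary issue is that the reduced coordinates of $g(n)\Gamma$ are only \emph{piecewise} polynomial in $n$, so ``$(p_1(n),\dots,p_d(n))\bmod\Z^d$'' with each $p_i$ a genuine polynomial only describes the unreduced sequence $g(n)$, not the point in the nilmanifold.

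This wrap-around failure is exactly why the paper cannot take the shortcut you propose and must instead induct on the dimension of the nilmanifold (Proposition~\ref{recur-prop}): it uses a single horizontal character $\eta:G\to\R/\Z$ (for which the ``mod $1$'' picture is legitimate, since $\eta$ factors through the abelianisation), applies the one-dimensional torus lemma to $\eta\circ g$, and then invokes the factorisation $g=\beta g'\gamma$ from \cite[Prop.~9.2]{green-tao-nilratner} to push the sequence into a proper rational subgroup. Your claim to ``avoid the quantitative Leibman equidistribution theorem'' is thus exactly where the argument breaks; the factorisation machinery is what correctly manages the nonabelian gluing that your torus reduction ignores. The degree-induction with Dirichlet/Weyl that you describe is fine as far as it goes, but it proves only the analogue of Lemma~\ref{lem1}, not Theorem~\ref{recur-thm}.
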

\emph{Remark.} The progressions $P_i$ need not have the same common difference. 

We prove this theorem in \S \ref{sec2}, and deduce Szemer\'edi's theorem from it and the inverse theorem $\GI(s)$ in \S \ref{sec3}.

\section{Nilsequences are almost constant on progressions}\label{sec2}

In this paper the \emph{degree} $s$ of a nilsequence will not be particularly important, so we suppress most mention of it, recalling that it is nonetheless bounded by the complexity parameter $M$. The reader may care to note that, as a consequence of this, we do not need the full strength of $\GI(s)$ but only a weaker version in which correlation with a nilsequence of degree $O_s(1)$ (rather than $s$) is obtained. However, we know of no proof this result that is easier than the full-strength version and we also know (with Ziegler) a not especially painful argument for deducing the full version from the weak one.

The goal of this section is to prove Theorem \ref{recur-thm}. By induction on the dimension of the underlying nilmanifold, the result follows very quickly from the following.

\begin{proposition}\label{recur-prop}
Let the notation be as in Theorem \ref{recur-thm} above. Then we may partition $P$ into a disjoint union of progressions $P_i$, $i = 1,\dots, m$, each of length $\gg_{M,\eps}  |P|^{\kappa_M}$, and such that the following is true. For each $i = 1,\dots,m$ there is a polynomial nilsequence $(F_i(h_i(n)\Lambda_i))_{n \in \Z}$ of complexity $O_{M}(1)$ whose underlying nilmanifold has dimension strictly less than that of $(F(g(n)\Gamma))_{n \in \Z}$, and such that
\[ \diam_{P_i}(F(g(n)\Gamma) - F_i(h_i(n)\Lambda_i)) \leq \eps\] for all $i = 1,\dots, m$.
\end{proposition}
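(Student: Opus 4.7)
The plan is to reduce from $G/\Gamma$ to the quotient nilmanifold $G'/\Gamma' := (G/G_s)/(\Gamma G_s/G_s)$, which has strictly smaller dimension. Here $G_s$ denotes the last nontrivial step of the filtration; it is central since $G_{s+1} = \{e\}$, so the fiber $G_s/\Gamma_s$ is a ``vertical'' torus.

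I will work in a Mal'cev basis $X_1, \ldots, X_k$ of $G$ adapted to the filtration, with $X_{d+1}, \ldots, X_k$ spanning the Lie algebra of $G_s$. In these coordinates one can factor
\[
g(n) = \g_<(n) \cdot \g_s(n), \qquad \g_s(n) = \exp\Bigl(\sum_{i > d} t_i(n) X_i\Bigr) \in G_s,
\]
where each $t_i : \Z \to \R$ is a polynomial of degree $\leq s$ and $\g_<(n) := \exp(t_1(n) X_1) \cdots \exp(t_d(n) X_d)$. The horizontal part $\g_<(n)$ descends to a polynomial sequence $h(n)$ on $G'/\Gamma'$; this $h$ will serve as each $h_i$ in the proposition (independently of $i$), with $\Lambda_i := \Gamma'$.

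The key technical input will be a polynomial recurrence lemma: for any finite collection of polynomials $\Z \to \R/\Z$ of degree $\leq s$ and any $\eps' > 0$, one can partition an arithmetic progression $P$ into subprogressions of length $\gg_{s,\eps'} |P|^{\kappa_s}$ on each of which every polynomial in the collection varies by at most $\eps'$. This is proved by iteratively differencing the polynomials and applying pigeonhole to the leading coefficient to find a common difference along which that coefficient is small modulo $1$, then inducting on degree. I will apply this to the vertical coordinate polynomials $t_{d+1}, \ldots, t_k$ taken modulo the lattice $\Gamma_s$, with $\eps'$ chosen so that an $\eps'$-shift in these coordinates induces a displacement in $G/\Gamma$ of size at most $\eps/(2 \Lip(F))$; this yields the partition into progressions $P_i$ of length $\gg_{M,\eps} |P|^{\kappa_{s,M}}$.

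On each $P_i$ I will fix $n_i \in P_i$ and set $v_i := \g_s(n_i) \in G_s$, reduced modulo $\Gamma_s$ so that $v_i$ has bounded Mal'cev coordinates. Define $F_i : G'/\Gamma' \to \C$ by $F_i(y\Gamma') := F(v_i \tilde y \Gamma)$, where $\tilde y \in G$ is the Mal'cev lift of $y\Gamma'$ built from $X_1, \ldots, X_d$; standard Mal'cev estimates should give that $F_i$ has complexity $O_M(1)$. Centrality of $G_s$ gives $g(n)\Gamma = \g_s(n)\g_<(n)\Gamma = \g_<(n)\g_s(n)\Gamma$, and combining the Lipschitz bound on $F$ with the closeness of $\g_s(n)$ to $v_i$ on $P_i$ will yield $|F(g(n)\Gamma) - F_i(h(n)\Gamma')| \leq \eps/2$ for $n \in P_i$, which in particular bounds the diameter of the difference by $\eps$. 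The main obstacle will be the polynomial recurrence lemma: getting the lower bound $|P|^{\kappa_s}$ (rather than merely positive density) on the subprogression lengths requires a careful quantitative Weyl--van der Corput argument rather than the classical qualitative statement.
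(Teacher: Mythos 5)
Your approach is genuinely different from the paper's, and it has a gap at a crucial point: the construction of the function $F_i$ on the quotient nilmanifold.

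You propose passing to the quotient $G' = G/G_s$ and defining $F_i(y\Gamma') := F(v_i\tilde{y}\Gamma)$, where $\tilde{y}$ is the Mal'cev lift of $y\Gamma'$ built from $X_1,\dots,X_d$. For this to give a polynomial nilsequence of complexity $O_M(1)$, the function $F_i$ must be $O_M(1)$-Lipschitz on $G'/\Gamma'$ --- but in general it is not even continuous. The map $y\Gamma' \mapsto v_i\tilde{y}\Gamma$ is, up to a left translation, a set-theoretic section of the fiber bundle $\pi : G/\Gamma \to G'/\Gamma'$ with fiber the torus $G_s/\Gamma_s$. A continuous section of this bundle exists if and only if the bundle is trivial, and for non-abelian $G$ it typically is not. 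The simplest example is the Heisenberg nilmanifold: the projection from the $3$-dimensional Heisenberg nilmanifold to its horizontal $2$-torus is a circle bundle with nonzero Euler number, so no continuous section exists. Concretely, as the Mal'cev coordinates of $y\Gamma'$ cross the boundary of the fundamental domain, the lift $\tilde{y}$ changes by right multiplication by $\gamma z$ with $\gamma \in \Gamma$ and $z \in G_s$ a commutator correction term that need not lie in $\Gamma_s$, so $v_i\tilde{y}\Gamma$ jumps by $z$. This discontinuity is precisely where the phrase ``standard Mal'cev estimates should give that $F_i$ has complexity $O_M(1)$'' hides the problem.

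The paper sidesteps this topological obstruction entirely by working with a \emph{subgroup} of $G$ rather than a \emph{quotient}. Choosing a nontrivial horizontal character $\eta$, one applies Lemma \ref{lem1} (which does the same job as your ``polynomial recurrence lemma,'' for a single polynomial) to $\eta \circ g$, and then the factorisation Lemma \ref{lem3} (ultimately \cite[Prop.~9.2]{green-tao-nilratner}) produces $g = \beta g' \gamma$ with $g'$ taking values in a connected proper rational subgroup $G' \leq G$, $\beta$ smooth and $\gamma$ rational. After a further subdivision handling the periodicity of $\gamma$, one takes $H_j = \gamma_0^{-1}G'\gamma_0$ and defines $F_j$ by restricting the left translate $x \mapsto F(\beta(n_0)\gamma_0 x)$ to $H_j/\Lambda_j$. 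Because this $F_j$ is merely the restriction of a (translated) Lipschitz function on $G/\Gamma$ to a subnilmanifold, its Lipschitz constant is automatically $O_M(1)$; no section needs to be constructed. This is the essential structural difference: freezing the horizontal character and pushing $g$ into a subgroup is safe, while freezing the vertical coordinate and pushing $g$ into a quotient is not, because passage to a quotient requires a continuous section that does not generally exist.

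If you want to salvage your line of reasoning, the fix is essentially to replace the quotient construction with a subgroup construction, which is what the factorisation lemma accomplishes; the polynomial-equidistribution input (your ``recurrence lemma'') survives intact, but it is then applied to a single horizontal phase $\eta \circ g$ rather than to all $k - d$ vertical coordinate polynomials at once.
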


We derive this result in turn from three lemmas. The first and its proof are essentially \cite[Corollary 5.6]{gowers-longaps}, albeit formulated somewhat differently.  It can be viewed as an analogue of Theorem \ref{recur-thm} for polynomial phases.

\begin{lemma}[Polynomials are almost constant on progressions]\label{lem1}
Let $s \geq 1$ be an integer. Then there is some $\kappa_s > 0$ with the following property. Let $\phi : \R \rightarrow \R/\Z$ be a polynomial phase of degree $s$, and suppose that $P \subseteq \Z$ is a finite progression. Then we may partition $P$ into progressions $P_1,\dots, P_m$, $|P_i| \gg_s |P|^{\kappa_s}$, such that $\diam_{P_i}(\phi)  \leq \frac{1}{10}$ for $i = 1,\dots,m$. 
\end{lemma}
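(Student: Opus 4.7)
My plan is to prove the lemma by induction on the degree $s$. For the base case $s=1$, write $\phi(n) = \alpha n + \beta$ and parametrise $P = \{a+jd : 0 \leq j < N\}$, so that the phase along $P$ is $(\alpha d)j \bmod 1$ up to an additive constant. Dirichlet's approximation theorem then supplies $1 \leq q \leq \sqrt{N}$ with $\|q\alpha d\|_{\R/\Z} \leq N^{-1/2}$; partitioning the index set $\{0,\dots,N-1\}$ first into residue classes modulo $q$ and then into consecutive intervals of length $\lfloor \sqrt{N}/10 \rfloor$ produces sub-APs of $P$ (with common difference $qd$) of length $\gg \sqrt{N}$ on which $\phi$ varies by at most $1/10$. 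This gives $\kappa_1 = 1/2$.

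For the inductive step, assume the lemma for degree $s-1$ with exponent $\kappa_{s-1}>0$. After an affine change of variables I may take $P = \{0,\dots,N-1\}$ and write $\phi(n) = \sum_{j=0}^s \alpha_j n^j$. The idea is to pick a common difference $q$ with $\|\alpha_s q^s\|_{\R/\Z}$ very small, effectively killing the leading term modulo $1$. On a sub-AP $\{n_0 + kq : 0 \leq k < L\}$, expand
\[
 \phi(n_0+kq) = \alpha_s q^s \, k^s + \psi_{n_0}(k),
\]
where $\psi_{n_0}$ is a polynomial phase of degree $\leq s-1$ in $k$. The top term contributes at most $L^s \|\alpha_s q^s\|_{\R/\Z}$ to the diameter over this sub-AP, which I arrange to be $\leq 1/20$. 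Applying the inductive hypothesis to $\psi_{n_0}$ on $\{0,\dots,L-1\}$ then yields sub-APs of length $\gg L^{\kappa_{s-1}}$ on which $\psi_{n_0}$ varies by $\leq 1/20$; concatenating across the residues $n_0 \in \{0,\dots,q-1\}$ produces the desired partition of $P$, with total variation at most $1/10$ on each piece.

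The step that drives the argument is therefore locating $q$ with $\|\alpha_s q^s\|_{\R/\Z}$ appropriately small, and for this I appeal to a quantitative Weyl-type bound: for any $\alpha \in \R$ and $Q \geq 1$ there exists $1 \leq q \leq Q$ with $\|\alpha q^s\|_{\R/\Z} \leq Q^{-c_s}$, for some $c_s > 0$ depending only on $s$ (obtainable by iterated Weyl differencing, which yields e.g.\ $c_s = 2^{1-s}$). Choosing $Q \sim N^{s/(s+c_s)}$ and $L \sim N^{c_s/(s+c_s)}$ balances the constraints $qL \leq N$ and $L^s Q^{-c_s} \leq 1/20$, and the induction closes with $\kappa_s := \kappa_{s-1}\, c_s/(s+c_s) > 0$. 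This Weyl-type bound on $\min_{q \leq Q} \|\alpha q^s\|_{\R/\Z}$ is the main technical obstacle; it is classical but not altogether trivial, whereas the rest of the argument is a reorganisation combining the binomial expansion of $(n_0+kq)^j$ with the induction hypothesis.
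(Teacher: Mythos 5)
Your proof is the same in spirit as the paper's: partition $P$ into arithmetic subprogressions whose common difference $q$ is chosen, via a Weyl-type diophantine lemma, so that the leading coefficient $\alpha_s q^s$ is close to a rational with small denominator (in the paper's formulation, $\Vert \alpha q^s\Vert_{\R/\Z}$ is small), which makes the top-degree term oscillate negligibly on each piece; then reduce the degree and iterate. The diophantine input you cite -- for any $\alpha$ and $Q \geq 1$ there is $1 \leq q \leq Q$ with $\Vert \alpha q^s \Vert_{\R/\Z} \leq Q^{-c_s}$ -- is exactly the ``standard diophantine result essentially due to Weyl'' that the paper invokes, with $Q = \sqrt{N}$ and $c_s = \delta_s$. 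Your $\psi_{n_0}$ plays the role of the paper's lower-degree approximant $\phi_i$, and the balancing $\kappa_s = \kappa_{s-1}\,c_s/(s+c_s)$ is the analogue of the paper's exponent loss at each step. So this is not a different route, and a short remark would normally suffice.

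However there is a genuine (if easily repaired) gap in the error accounting, and it is precisely the issue the paper takes care with. Your inductive hypothesis, as stated, gives subprogressions on which $\psi_{n_0}$ has diameter $\leq \frac{1}{10}$; you then assert, without justification, that it has diameter $\leq \frac{1}{20}$. Adding the $\leq \frac{1}{20}$ contribution from the leading term would otherwise give $\frac{3}{20} > \frac{1}{10}$ and the induction does not close. To fix this you must prove something slightly stronger than the statement of the lemma. The paper's device is to prove a ``one-step'' approximation result in which $\phi$ is within $\frac{1}{100 s^2}$ of a polynomial phase $\phi_i$ of degree $\leq s-1$ on each piece, and then to telescope: the cumulative error is $\frac{1}{100}\sum_{j\geq 1} j^{-2} < \frac{1}{10}$, independent of $s$. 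Equivalently, you could strengthen your inductive hypothesis to ``for every $\eta > 0$ there is $\kappa_{s,\eta} > 0$ such that one can achieve diameter $\leq \eta$,'' and in the inductive step invoke it with $\eta/2$. Either way the exponent $\kappa_s$ remains positive (it just decays faster in $s$), and the rest of your argument -- including the parameter balance $Q \sim N^{s/(s+c_s)}$, $L \sim N^{c_s/(s+c_s)}$, after absorbing harmless constant factors so that $L^s Q^{-c_s}$ is actually $\leq \frac{1}{20}$ rather than merely $O(1)$ -- goes through as you describe.
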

\begin{proof} Suppose that $|P|$ is sufficiently large in terms of $s$; the result is trivial otherwise, since we may partition into progressions of length 1. It suffices to prove the weaker statement that we may partition $P$ into progressions $P_1,\dots, P_m$, $|P_i| \gg_s |P|^{\kappa_s}$, such that for each $i$ there is a polynomial phase $\phi_i : \R \rightarrow \R/\Z$ of degree at most $s-1$ such that 
\[ \diam_{P_i}(\phi - \phi_i) \leq \frac{1}{100s^2}.\]  We may then work by induction on the degree to obtain the lemma (with a smaller value of $\kappa_s$ of course), using the fact that 
\begin{align*} \diam(\phi) \leq \diam (\phi - & \phi_s) + \diam   (\phi_s - \phi_{s-1}) + \dots  \\ & + \diam (\phi_1 - \phi_0) + \diam \phi_0  \leq \frac{1}{100}\sum_s \frac{1}{s^2} < \frac{1}{10}.\end{align*}

To obtain the weaker statement one invokes the following standard diophantine result essentially due to Weyl: there is some $\delta_s > 0$ such that, for any $\alpha \in \R/\Z$ and any $N \geq 1$, there is some $n \leq \sqrt{N}$ such that $\Vert \alpha n^s \Vert_{\R/\Z} \ll_s N^{-\delta_s}$.

Supposing that $\phi(n) = \theta n^s + \dots$ and that $P$ has common difference $d$ and length $N$, we apply this result with $\alpha := \theta d^s$. Subdividing $P$ into subprogressions $P_i$ of length between $N^{\delta_s/2}$ and $N^{2\delta_s/2}$ and common difference $dn$ gives the required statement. Note that such a subdivision is indeed possible since $n \leq \sqrt{N}$ and $N$ is sufficiently large in terms of $s$.\end{proof}

\begin{lemma}[Weyl-type equidistribution theorem]\label{lem2}
Suppose that $\phi : \R \rightarrow \R/\Z$ is a polynomial phase of degree $s$, and that $\diam_{[N]}(\phi) \leq \frac{1}{10}$. Then there is some $q = O_s(1)$ such that $\Vert q \phi \Vert_{C^{\infty}[N]} = O_s(1)$.
\end{lemma}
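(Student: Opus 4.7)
The plan is to reduce Lemma \ref{lem2} to a well-known quantitative Weyl-type equidistribution theorem for polynomial phases, namely: if $\phi$ is a degree-$s$ polynomial phase and $|\E_{n \in [N]} e(\phi(n))| \geq \delta$, then there exists $q = O_{s,\delta}(1)$ such that $\|q\phi\|_{C^\infty[N]} = O_{s,\delta}(1)$. This statement is the abelian (one-dimensional torus) case of the quantitative Leibman-type distribution theorem proved in \cite{green-tao-nilratner}, and traces back to classical ideas of Weyl.

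The first step is to convert the geometric hypothesis $\diam_{[N]}(\phi) \leq 1/10$ into an exponential sum lower bound. Since all residues $\phi(n) \bmod 1$ lie in a single subinterval of $\R/\Z$ of length $\leq 1/10$, the unit complex numbers $e(\phi(n))$ lie in an arc of the unit circle of angular width at most $2\pi/10$. Projecting each onto the midpoint direction of that arc gives
\[
\left| \E_{n \in [N]} e(\phi(n)) \right| \geq \cos(\pi/10) > \tfrac{9}{10}.
\]

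The second step is simply to apply the Weyl equidistribution theorem quoted above with $\delta = 9/10$, a constant independent of $s$. This immediately produces the required $q = O_s(1)$ with $\|q\phi\|_{C^\infty[N]} = O_s(1)$. I do not anticipate a serious obstacle here: the only conceptual point is the passage from geometric near-constancy of $\phi$ on $[N]$ to additive smoothness of its coefficients, and this is exactly what the Weyl theorem encodes. If one preferred to avoid the black-box citation, a self-contained proof is available by induction on $s$, using van der Corput's inequality to reduce the exponential sum bound for $\phi$ to many exponential sum bounds for the discrete derivatives $\phi(n+h)-\phi(n)$ (polynomial phases of degree $s-1$), with base case $s=1$ handled by Dirichlet's approximation theorem; the only technical care needed is to combine the resulting many denominators and smoothness controls into a single $q$, which is routine.
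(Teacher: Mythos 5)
Your argument is correct and takes essentially the same route as the paper, which simply invokes Proposition 4.3 of \cite{green-tao-nilratner} (the quantitative Weyl theorem) as giving the result immediately. You make explicit the easy reduction from small diameter of $\phi$ on $[N]$ to a large exponential sum $|\E_{n \in [N]} e(\phi(n))| \geq \cos(\pi/10)$, which is just the standard way of verifying the non-equidistribution hypothesis needed to apply that proposition.
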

\begin{proof} This follows immediately from \cite[Proposition 4.3]{green-tao-nilratner} (the proof of which can be read independently of the rest of that paper, which we do not rely on heavily in this note). Observe, however, that it is quite classical and essentially goes back to Weyl, being the statement that a polynomial phase that is not equidistributed has almost rational coefficients. \end{proof}

\begin{lemma}[Factorisation of polynomial sequences]\label{lem3}
Let $(G/\Gamma, G_{\bullet})$ be a filtered nilmanifold of complexity $M$, and suppose that $g \in \poly(\Z, G_{\bullet})$. Let $\eta : G \rightarrow \R/\Z$ be a horizontal character with Lipschitz constant $O_M(1)$. Suppose that $P$ is an arithmetic progression and that $\diam_P (\eta \circ g) \leq \frac{1}{10}$. Then there is a factorisation $g = \beta g' \gamma$, where $\beta, \gamma \in \poly(\Z, G_{\bullet})$ and:
\begin{enumerate}
\item $\beta$ is smooth in the sense that $d_G(\beta(n),\beta(n')) = O_M(\delta)$ whenever $n,n' \in P$ and $|n - n'| \leq \delta |P|$;
\item $g'$ takes values in a connected $O_M(1)$-rational subgroup $G' \leq G$ with $\dim (G') < \dim (G)$;
\item $\gamma$ is $O_M(1)$-rational.
\end{enumerate}
\end{lemma}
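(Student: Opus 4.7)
The plan is to deduce the factorization from the smoothness/rationality information that Lemma \ref{lem2} extracts from the scalar polynomial $\eta\circ g$, and then lift it to $G$ using the Lie algebra derivative $\eta_* : \mathrm{Lie}(G)\to\R$ of $\eta$.

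First, I would rescale, writing $P = \{a+dn : 0\le n < N\}$ and passing to $\tilde g(n) := g(a+dn)$, which lies in $\poly(\Z,G_\bullet)$. In Leibman/Mal'cev form, $\tilde g(n) = \prod_{i=0}^{s} g_i^{\binom{n}{i}}$ with $g_i \in G_i$. Since $\eta$ is a homomorphism into $\R/\Z$, one has $\eta\circ\tilde g(n) = \sum_i \alpha_i\binom{n}{i}$ with $\alpha_i := \eta(g_i)$, a polynomial phase of degree $\le s$ of diameter $\leq 1/10$ on $[N]$. Lemma \ref{lem2} then supplies an integer $q = O_M(1)$ with $\|q\alpha_i\|_{\R/\Z} = O_M(N^{-i})$ for $i \geq 1$. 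Choose real lifts $\tilde\alpha_i := \eta_*(\log g_i)$ of $\alpha_i$ and decompose $\tilde\alpha_i = \tilde\alpha_i^{\mathrm{sm}} + \tilde\alpha_i^{\mathrm{rat}}$ by rounding $q\tilde\alpha_i$ to a nearest integer, so that $\tilde\alpha_i^{\mathrm{rat}} \in q^{-1}\Z$ and $|\tilde\alpha_i^{\mathrm{sm}}| = O_M(N^{-i})$.

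Next I would lift this scalar decomposition to $G$. Without loss of generality $\eta$ is nontrivial (otherwise the conclusion $\dim G' < \dim G$ is unattainable). For each $i$ with $\eta_*|_{\mathrm{Lie}(G_i)} \not\equiv 0$, pick a Mal'cev basis vector $X_i^* \in \mathrm{Lie}(G_i)$ with $\eta_*(X_i^*) \in \Z\setminus\{0\}$ and $|\eta_*(X_i^*)| = O_M(1)$, which exists because $\eta$ has bounded Lipschitz constant and takes integer values on $\Gamma$; for other $i$ one automatically has $\tilde\alpha_i = 0$, so no lift is needed. Setting $Y_i := (\tilde\alpha_i^{\mathrm{sm}}/\eta_*(X_i^*))X_i^*$ and $Z_i := (\tilde\alpha_i^{\mathrm{rat}}/\eta_*(X_i^*))X_i^*$, so that $\eta_*(Y_i) = \tilde\alpha_i^{\mathrm{sm}}$ and $\eta_*(Z_i) = \tilde\alpha_i^{\mathrm{rat}}$, define
\[ \beta(n) := \prod_{i=0}^s \exp(Y_i)^{\binom{n}{i}}, \qquad \gamma(n) := \prod_{i=0}^s \exp(Z_i)^{\binom{n}{i}}, \]
both in $\poly(\Z,G_\bullet)$ by construction, and let $g'(n) := \beta(n)^{-1}\tilde g(n)\gamma(n)^{-1}$, which is automatically polynomial.

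To verify the three conclusions, I would apply $\eta_*\circ\log$ factor by factor. Because $\eta_*$ annihilates every Lie bracket (being a Lie algebra map into the abelian $\R$), the Baker--Campbell--Hausdorff formula gives $\eta_*(\log ABC) = \eta_*(\log A) + \eta_*(\log B) + \eta_*(\log C)$, and analogously for inverses. Combined with $\eta_*(\log \tilde g(n)) = \sum \tilde\alpha_i\binom{n}{i}$, $\eta_*(\log\beta(n)) = \sum \tilde\alpha_i^{\mathrm{sm}}\binom{n}{i}$ and $\eta_*(\log\gamma(n)) = \sum \tilde\alpha_i^{\mathrm{rat}}\binom{n}{i}$, this forces $\eta_*(\log g'(n)) = 0$ identically in $n$; hence $g'(n) \in \exp(\ker\eta_*) = \ker(\eta)^0$, a connected codimension-$1$ normal $O_M(1)$-rational subgroup $G'$. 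Smoothness of $\beta$ follows from $|Y_i| = O_M(N^{-i})$ (since Mal'cev basis vectors have $O_M(1)$ norm) and $|\binom{n}{i} - \binom{n'}{i}| = O_M(\delta N^i)$ for $|n-n'| \le \delta N$; rationality of $\gamma$ follows from the $O_M(1)$ denominators of the $Z_i$ together with $\binom{n}{i}\in\Z$. Finally, undoing the rescaling $n \mapsto a+dn$ transfers the factorization from $[N]$ back to the original $P$. The main subtlety is that a naive argument with the $\R/\Z$-valued $\eta$ only gives $\eta(g'(n)) \equiv 0 \pmod 1$, placing $g'$ in the possibly disconnected $\ker(\eta)$ rather than its identity component; lifting to $\eta_*$ and using the specific lifts $\tilde\alpha_i := \eta_*(\log g_i)$ upgrades this to the exact equation $\eta_*(\log g'(n)) = 0$, which is precisely what lands $g'$ in the strictly smaller connected subgroup $\ker(\eta)^0$.
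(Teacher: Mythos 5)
Your proof is correct and follows essentially the same route as the paper: reduce to $P=[N]$, invoke Lemma~\ref{lem2} to bound the $C^{\infty}[N]$ norm of $q\,\eta\circ g$, then split the Taylor coefficients into a smooth part and a rational part and lift the splitting to $G$ through the kernel of $\eta_*$. The paper delegates this last construction to \cite[Proposition 9.2]{green-tao-nilratner}, whereas you have reproduced its content in the single-parameter, codimension-one case; the details you supply (the integrality $\eta_*(X_i^*)\in\Z$ of $O_M(1)$ size, the BCH additivity of $\eta_*\circ\log$, and the point that one lands in the connected $\ker(\eta)^0$ rather than the possibly disconnected $\ker(\eta)$) are exactly the ones that argument uses.
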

\begin{proof} By rescaling linearly (and noting that if $g(n)$ lies in $\poly(\Z,G_{\bullet})$ then so does $g(an + b)$, cf. \cite[Lemma A.8]{green-tao-arithregularity}) we may assume that $P = [N]$. Applying 
Lemma \ref{lem2} and replacing $\eta$ by $\tilde \eta = q \eta$, where $q = O_M(1)$, we may assume that $\Vert \eta \circ g \Vert_{C^{\infty}[N]} = O_M(1)$.
The result may now be proved in exactly the same way as \cite[Proposition 9.2]{green-tao-nilratner} (although that result was a little more notationally intensive, formulated as it was for multiparameter sequences).
\end{proof}

\emph{Deduction of Proposition \ref{recur-prop}.} Select a nontrivial horizontal character $\eta : G \rightarrow \R/\Z$ with Lipschitz constant $O_M(1)$. Apply Lemma \ref{lem1} followed by Lemma \ref{lem3} to $\phi := \eta \circ g$, obtaining a decomposition of $P$ into progressions $P_i$ such that on each $P_i$ we have a factorisation $g = \beta g' \gamma$ of the stated type. Note that this factorisation depends on $i$, but we suppress this for notational convenience. Suppose that $\gamma(n)$ has period $q = O_M(1)$, so that $\gamma(n)\Gamma = \gamma(n')\Gamma$ whenever $n \equiv n' \md{q}$. Subdivide $P_i$ into progressions $P_j$  (this is a convenient abuse of notation) whose common difference is a multiple of $q$ and whose length is $c_{\eps,M} |P_i|$, for a constant $c_{\eps,M} > 0$ to be specified shortly. For each $j$, fix some $\gamma_0 = O_M(1)$ such that $\gamma(n)\Gamma = \gamma_0 \Gamma$ for all $n \in P_j$. Then if $n \in P_j$ we have
\[ g(n)\Gamma = \beta(n)\gamma_0 ( \gamma_0^{-1} g'(n)\gamma_0)\Gamma.\]
Set $H_j := \gamma_0^{-1} G' \gamma_0$ and $\Lambda_j := H_j \cap \Gamma$. Then $H_j/\Lambda_j$ is a nilmanifold of complexity $O_M(1)$, and certainly $\dim(H_j) = \dim (G') < \dim (G)$, and the polynomial sequence $h_j(n) := \gamma_0^{-1} g'(n)\gamma_0$ takes values in $H_j$. Pick some $n_0 \in P_j$, and define $F_j : H_j/\Lambda_j \rightarrow \C$ by
\[ F_j(x) := F(\beta(n_0)\gamma_0 x).\]
Then $F_j$ is $O_M(1)$-Lipschitz and
\begin{align*}
\diam_{P_j} (F(g& (n)\Gamma)  - F_j(h_j(n)\Lambda_j)) \\ & = \sup_{n \in P_j} |F(\beta(n) \gamma_0 ( \gamma_0^{-1} g'(n)\gamma_0)\Gamma) - F(\beta(n_0) \gamma_0 ( \gamma_0^{-1} g'(n)\gamma_0)\Gamma) | \\ & \leq \eps,
\end{align*}
the last line following if $c_{\eps,M}$ is sufficiently small from the smoothness of $\beta$ and the fact that $F$ has Lipschitz constant $O_M(1)$. 

\emph{Remark.} An almost identical argument appears in \cite[\S 2]{green-tao-ukmobius}. There, the reader will find a more careful discussion of the various rather rough assertions we have just made concerning Lipschitz constants and the like.

\section{Proof of Szemer\'edi's theorem}\label{sec3}

We now turn to the deduction of Szemer\'edi's theorem from Proposition \ref{recur-prop}.  As described in many places (for example \cite{gowers-longaps}) it follows easily by an iterated application of the following proposition.

\begin{proposition}[Density increment step]\label{dens-increment}
Suppose that $k$ is an integer and that $\alpha \in (0,1)$ is a parameter. Then there is a number $N_0(k,\alpha)$, a function $\omega_{k,\alpha} : \R^+ \rightarrow \R^+$ which tends to infinity and a non-decreasing function $\tau : (0,1) \rightarrow \R^+$ such that the following is true. Suppose that $P$ is a progression and that $A \subseteq P$ is a set of size $\alpha |P|$ containing no nontrivial $k$-term arithmetic progression. Then either $|P| \leq N_0(k,\alpha)$, or else there exists another arithmetic progression $P' \subseteq P$, $|P'|  \geq \omega_{k,\alpha}(|P'|)$, together with a set $A' \subseteq P'$ with $|A'| \geq (\alpha + \tau(\alpha))|P'|$ which contains no nontrivial $k$-term arithmetic progressions.
\end{proposition}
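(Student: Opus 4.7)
\emph{Proof plan.} The plan is to run the classical density-increment argument of Roth and Gowers, using the inverse conjecture $\GI(k-2)$ together with Theorem \ref{recur-thm} as the two main inputs. After affine rescaling we may assume $P = [N]$; the conclusion is trivial unless $N \geq N_0(k,\alpha)$ for a sufficiently large constant. Since $A \subseteq [N]$ has density $\alpha$ and no nontrivial $k$-APs, the normalised count $\E_{n,d \in [N]} 1_A(n) 1_A(n+d) \cdots 1_A(n+(k-1)d)$ is at most $O_k(1/N)$, whereas a random set of density $\alpha$ would produce about $\alpha^k$. For $N$ large in terms of $k$ and $\alpha$ this discrepancy exceeds $\alpha^k/2$, and the generalised von Neumann theorem converts it into a lower bound $\Vert 1_A - \alpha 1_{[N]} \Vert_{U^{k-1}[N]} \geq \delta_0(k,\alpha)$.

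Applying $\GI(k-2)$ to the bounded function $1_A - \alpha 1_{[N]}$ produces a filtered nilmanifold $G/\Gamma$ of complexity $M = O_{k,\alpha}(1)$, a polynomial sequence $g \in \poly(\Z,G_\bullet)$, and an $O_{k,\alpha}(1)$-Lipschitz function $F : G/\Gamma \to \C$ with $\Vert F \Vert_{\infty} \leq 1$, such that $(F(g(n)\Gamma))_{n \in \Z}$ is a degree $\leq k-2$ polynomial nilsequence of complexity $M$ and
\[ \Bigl| \E_{n \in [N]} (1_A(n) - \alpha) F(g(n)\Gamma) \Bigr| \geq \delta_1(k,\alpha). \]
Choosing $\eps := \delta_1/100$ and applying Theorem \ref{recur-thm} to this nilsequence on $[N]$ partitions $[N] = P_1 \sqcup \cdots \sqcup P_m$ into progressions of length $\gg_{k,\alpha} N^{\kappa}$, with $\kappa := \kappa_{k-2,M} > 0$, on each of which $F \circ g$ oscillates by at most $\eps$. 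Fixing $n_i \in P_i$ and writing $c_i := F(g(n_i)\Gamma)$, this oscillation bound lets me replace $F(g(n)\Gamma)$ by $c_i$ throughout $P_i$ at total cost $\leq \eps N$, yielding
\[ \Bigl| \sum_{i=1}^m |P_i|\, c_i \, \E_{n \in P_i}(1_A(n) - \alpha) \Bigr| \geq \delta_1 N / 2. \]

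What is left is a pigeonhole. Writing $e_i := \E_{P_i}(1_A(n) - \alpha)$, the identity $\sum_i |P_i| e_i = |A| - \alpha N = 0$ forces the positive and negative parts $S_\pm := \sum_{\pm e_i > 0} |P_i||e_i|$ to coincide. Since $|c_i| \leq 1$, the previous display gives $2 S_+ \geq \delta_1 N/2$, so $S_+ \geq \delta_1 N/4$, and because $\sum_{e_i > 0} |P_i| \leq N$ some $e_i$ satisfies $e_i \geq \delta_1/4$. Taking $P'$ to be this $P_i$ and $A' := A \cap P'$ delivers the required density increment with $\tau(\alpha) := \delta_1(k,\alpha)/4$, while the length bound supplies $\omega_{k,\alpha}(|P|) := c_{k,\alpha} |P|^{\kappa}$. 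The substantive content is carried entirely by the three black boxes (generalised von Neumann, $\GI(k-2)$, and Theorem \ref{recur-thm}); the only bookkeeping subtlety worth flagging is converting a signed correlation of unknown sign into a \emph{one-sided} density increment, which is precisely what the zero-sum identity $\sum_i |P_i| e_i = 0$ is used for.
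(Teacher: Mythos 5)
Your argument is correct and follows the paper's proof essentially step for step: rescale to $[N]$, pass from the absence of $k$-APs through the generalised von Neumann inequality to a $U^{k-1}$ lower bound on the balanced function, invoke $\GI(k-2)$ to get a nilsequence correlation, partition via Theorem~\ref{recur-thm} so the nilsequence is nearly constant on each cell, and finish with the standard zero-sum pigeonhole to upgrade the two-sided $L^1$ fluctuation $\sum_i |P_i|\,|e_i| \gg N$ to a one-sided density increment on some $P_i$. The only cosmetic difference is in the phrasing of the pigeonhole (you split into $S_\pm$ and use $S_+ = S_-$; the paper adds $\sum_i \bigl|\sum_{P_i} f\bigr|$ to the zero-sum identity and pigeonholes the nonnegative quantity $\bigl|\sum_{P_i} f\bigr| + \sum_{P_i} f$), which is the same computation.
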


In applying this iteratively to establish Szemer\'edi's theorem, the point is that the second alternative can only occur $O_{\alpha}(1)$ times before the density of $A'$ inside $P'$ rises above 1, a contradiction.

\begin{proof} In this proof all implied constants are allowed to depend on $k$ and $\alpha$. By rescaling we may assume that $P = [N]$. Suppose then that $A \subseteq [N]$ is a set with cardinality $\alpha N$, but that $A$ contains no nontrivial $k$-term progressions. Define $f := 1_A - \alpha 1_{[N]}$ to be the balanced function of $A$, thus $\E_{n \in [N]} f(n) = 0$. Write 
\[ \Lambda_k (f_0,\dots, f_{k-1}) := \E_{n,d} f_0(n) f_1(n+d) \dots f_{k-1}(n+(k-1)d)\] for the multilinear operator counting $k$-term arithmetic progressions, and recall the generalised von Neumann theorem, which states that 
\[ |\Lambda_k(f_0,\dots, f_{k-1})| \ll \sup_{i = 0,\dots, k-1} \Vert f_i \Vert_{U^{k-1}}.\]
The expression $I := \Lambda_k (1_A, \dots, 1_A)$ is a normalised count of $k$-term progressions inside $A$, and we are supposing that the only such progressions are trivial (that is, have common difference 0). Therefore $I \leq 1/N$. On the other hand we may expand this as a sum of $2^k$ terms, the ``main'' term $\Lambda_k(\alpha,\dots,\alpha) = \alpha^k$ plus a sum of $2^k - 1$ other terms, each of which involves at least one copy of $f$. Supposing that $N > N_0(k,\alpha)$, the main term is much larger than the contribution of $1/N$ from the trivial progressions, and so one of these $2^k - 1$ other terms must be $\gg 1$. By the generalised von Neumann theorem this implies the crucial inequality
\[ \Vert f \Vert_{U^{k-1}} \gg 1.\]
By the inverse theorem for the Gowers $U^{k-1}$-norm (classical for $k = 3$, proved in \cite{green-tao-u3inverse} for $k = 4$, in \cite{green-tao-ziegler-u4inverse} for $k = 5$ and in the forthcoming paper \cite{uk-inverse} in the general case) this means that there is a degree $\leq (k-2)$ polynomial $1$-bounded nilsequence $(F(g(n)\Gamma))_{n \in \Z}$ of complexity $O(1)$ such that 
\[ |\E_n f(n) F(g(n)\Gamma)| \geq \delta,\] where $\delta \gg 1$.
Now we apply Theorem \ref{recur-thm} to partition $[N]$ into progressions $P_1,\dots,P_m$, each of length $\gg N^{c}$, such that $\diam_{P_i}(F(g(n)\Gamma)) \leq \delta/2$ for each $i$. Choose, for each $i$, some point $n_i \in P_i$. Then
\begin{align*} \delta N & = \sum_i \sum_{n \in P_i}  f(n) F(g(n_i)\Gamma)) + \sum_i \sum_{n \in P_i}  f(n) ( F(g(n)\Gamma) - F(g(n_i)\Gamma)) \\ & \leq \sum_i |\sum_{n \in P_i} f(n)| + \delta N/2,\end{align*}
and therefore $\sum_i |\sum_{n \in P_i} f(n)| \geq \delta N/2$.
Adding to this the equality \[\sum_i \sum_{n \in P_i} f(n) = 0\] and applying the pigeonhole principle, we conclude that there is at least one progression $P_i$ for which
\[ |\sum_{n \in P_i} f(n)| + \sum_{n \in P_i} f(n)  \geq \delta |P_i|/2,\] which means that
\[ \sum_{n \in P_i} f(n) \geq \delta |P_i|/4.\]
This means that the density of $A' := A \cap P_i$ in $P_i$ is at least $\alpha + \delta/4$, which implies Proposition \ref{dens-increment}.\end{proof}

\providecommand{\bysame}{\leavevmode\hbox to3em{\hrulefill}\thinspace}

\end{document}